\newtheorem{theorem}{Theorem}
\newtheorem*{theorem*}{Theorem}
\newtheorem{lemma}[theorem]{Lemma}
\theoremstyle{definition}
\newtheorem*{definition*}{Definition}
\newtheorem{proposition}[theorem]{Proposition}
\newtheorem*{exmp*}{Example}
\newcommand{\N}{{\mathbb{N}}}
\begin{document}

\title[Completely uniformly distributed sequences]{Completely uniformly distributed sequences \\based on de {B}ruijn sequences}

\author{Emilio Almansi}
\address{Departamento de  Computaci\'on, Facultad de Ciencias Exactas y Naturales, Universidad de Buenos Aires, Argentina}
\email{ealmansi@gmail.com}

\author{Ver{\'o}nica\  Becher}
\address{Departamento de  Computaci\'on, Facultad de Ciencias Exactas y Naturales, Universidad de Buenos Aires \& ICC CONICET, Argentina}
\email{vbecher@dc.uba.ar}
\thanks{Supported by Universidad de Buenos Aires and  CONICET, Argentina.}

\subjclass[2010]{Primary 11-04, 11K36; Secondary  68-04}

\keywords{Completely uniformly distributed sequences, Algorithms}

\date{September 24, 2019}

\begin{abstract}
  We study a construction published by Donald Knuth in 1965 
yielding a completely uniformly distributed sequence of real numbers. 
Knuth's work is based on de Bruijn sequences of increasing orders and alphabet sizes, 
which grow exponentially in each of the successive segments composing the generated sequence. 
In this work we present a similar albeit simpler construction using linearly increasing alphabet sizes, 
and give an elementary proof showing that the sequence it yields is also completely uniformly distributed. 
In addition, we present an alternative proof of the same result based on Weyl's criterion.
\end{abstract}

\maketitle

\section{Introduction}
Let $(\bar{x}_i)_{i \ge 1}$ be an infinite sequence of $k$-dimensional vectors of real numbers. For a given set $I \subseteq [0, 1)^k$, we denote by $\mathcal{A}(\bar{x}_1, \dots, \bar{x}_N ; I)$ the number of indexes $i \in [1,N]$ such that $\bar{x}_i$ lies in~$I$. A sequence $(\bar{x}_i)_{i \geq 1}$ of $k$-dimensional vectors in the unit cube is uniformly distributed if for every set $I$ included in $[0, 1)^k$,
\begin{equation*}
  \lim_{N\to\infty} \frac{1}{N} \mathcal{A}(\bar{x}_1, \dots, \bar{x}_N ; I) = |I| \text{,}
\end{equation*}
where $|I|$ is the measure of $I$.

For the theory of uniform distribution see the monograph  of 
 Kuipers and Niederreiter~\cite{KN}. Starting with a sequence $(x_i)_{i \geq 1}$ 
of real numbers in the unit interval, we define for every integer $k \geq 1$ 
the $k$-dimensional sequence ${\textstyle (\bar{x}^{(k)}_i)_{i \geq 1}}$ 
where ${\textstyle \bar{x}^{(k)}_i = (x_{i}, \ldots,x_{i + k - 1})}$ for $i \geq 1$. 
The sequence $(x_i)_{i \geq 1}$ is said to be completely uniformly distributed if for every integer 
$k \geq  1$, the sequence $(\bar{x}^{(k)} )_{i \geq 1}$ is uniformly distributed the $k$-dimensional unit cube.

If a sequence is  completely uniformly distributed then it also satisfies many other statistical properties common to all random sequences; see  the work of  Franklin~\cite{franklin-1963}. For example, for every fixed $k$ the probability of~$k$ consecutive terms having any specific relative order is $1/k!$.

The first reference to the concept of complete uniform distribution is the work of Korobov in 1948~\cite{Ko1,K} where he proves that the sequence $(f_\alpha(n) \mod 1)_{n\geq 1}$ where
\begin{equation*}
  f_\alpha(z) = \sum_{k\geq 1} e^{−k^\alpha} z^k, \mbox{with $\alpha\in(1,3)$}
\end{equation*}
is completely uniformly distributed. Since then, there have been many constructions of completely uniformly distributed sequences; see the books~\cite{KN,NW} and the references cited there. In~\cite{Levin99} Levin gives a   lower bound of decay of discrepancy of  completely uniformly distributed sequences and  gives constructions, using nets obtained with Sobol matrices and using Halton sequences, which achieve this low discrepancy bound.

Independently of the school of Korobov, 
in 1965 Knuth~\cite{knuth-1965} gives a construction of 
a completely uniformly distributed sequence $(x_i)_{i \geq 1}$ 
of real numbers based on de Bruijn sequences of increasing orders
 and alphabet sizes. In each of the successive segments composing 
the sequence, the alphabet sizes of the underlying de Bruijn 
sequences increase exponentially. 
Knuth gives an elementary proof that the sequence $(x_i)_{i \geq 1}$ 
is completely uniformly distributed  by reasoning about the  binary 
representation of the rational numbers $x_i$.
These binary representations are immediate 
 because the  alphabet size is always a power of $2$.

In the present work we give a similar albeit simpler construction than 
the one given by Knuth in~\cite{knuth-1965}
while preserving the property of complete uniform distribution. 
Our construction is based on de 
Bruijn sequences of linearly increasing orders
 and linearly increasing  alphabet sizes.
The argument used by Knuth to prove  that his sequence is completely uniformly distributed  does not hold in our case. 
We give  two proofs: one is elementary, and the other is based on Weyl's criterion.
The construction and the proofs we present here can
be generalized in a straightforward manner to other growths of alphabet sizes.
A first version of this work appears in~\cite{almansi-becher-2019}.

It may be possible to construct completely uniformly 
distributed sequences with  low discrepancy
 by using variants of de Bruijn sequences such as those considered by   
Becher and Carton in~\cite{necklaces} (nested perfect necklaces).
In that work, they show that 
 the binary expansion of the real number 
 defined by  Levin in~\cite{L99} 
 using Pascal matrices
is indeed the concatenation of such variants of de Bruijn sequences.

We finish this section with a short presentation of de Bruijn sequences. 
In Section~\ref{section:knuth-sequence} we present Knuth's sequence, and 
we devote  Section~\ref{sec:main-result} to  our construction. 

\subsection{De Bruijn Sequences}
A presentation of de Bruijn sequences with a historical background can be read from~\cite{berstel-2007}. 
A $b$-ary de Bruijn sequence of order~$k$ is a sequence of length~$b^k$ which, 
when viewed as a cycle, contains every possible $b$-ary sequence of length~$k$ exactly once as a contiguous subsequence.
For example, listed next are two distinct binary de Bruijn sequences of order~$3$:
\begin{equation*}
  \begin{aligned}
    0, 0, 0, 1, 0, 1, 1, 1; \\
    0, 0, 0, 1, 1, 1, 0, 1.
  \end{aligned}
\end{equation*}
Every  binary sequence of length $3$ appears exactly once as a contiguous subsequence in each example. 
This includes those instances such as $1, 0, 0$ which wrap around the right-hand end of the sequences.

Each $b$-ary de Bruijn sequence of order~$k$  is the label of a Hamiltonian  cycle in the 
directed graph $G_{b,k}=(V,E)$
where $V$ is the set of all $b$-ary sequences of length~$k$ and 
$E=\{(u,v): u=a_1, \dots, a_k ,\;\; v=a_2, \dots, a_k, c ,\;\; c\in \{0, \dots b-1\}\}$.
Due to good properties of de Bruijn graphs, each $b$-ary de Bruijn sequence of order~$k$ can also be identified with the label of an Eulerian cycle in $G_{b,k-1}$.
By the BEST theorem there are exactly $(b!)^{b^{k - 1}} / b^k$ different $b$-ary de Bruijn sequences of order~$k$.

A $b$-ary Ford sequence of order $k$, denoted by $F^{(b, k)}$, 
is the lexicographically least $b$-ary de Bruijn sequence of order~$k$. 
In the example above, the first sequence 
is the binary Ford sequence of order $3$, or $F^{(2, 3)} = 0, 0, 0, 1, 0, 1, 1, 1$.
In~\cite{fredricksen-1978}, Fredricksen and Maiorana introduce an algorithm for generating the
$b$-ary Ford sequence of order~$k$,
which  has constant amortized running time, see~\cite{ruskey-1991}.

\section{Knuth's Sequence}\label{section:knuth-sequence}

We follow Knuth's presentation in~\cite{knuth-1965}.
Knuth's sequence can be defined from  any given family of de Bruijn sequences. 
Here, we use Ford sequences because they are conveniently defined and because they can be generated efficiently.

\begin{definition*}
  Given $n \in \N$, let $F^{(2^n, n)} = f_1, \dots, f_{2^{n^2}}$ denote the $2^n$-ary Ford sequence of order~$n$. 
An $A$-sequence of order $n$, denoted by $A^{(n)}$, is the finite sequence of rational numbers obtained by dividing every term in $F^{(2^n, n)}$ by $2^n$:
  \begin{equation*}
    A^{(n)} = \frac{f_1}{2^n}, \frac{f_2}{2^n}, \dots, \frac{f_{2^{n^2}}}{2^n} = \bigg( \frac{f_i}{2^n} \bigg)_{i = 1,..,2^{n^2}}.
  \end{equation*}
 A $B$-sequence of order $n$, denoted by $B^{(n)}$, is the sequence obtained from the concatenation of $n 2^{2 n}$ copies of $A^{(n)}$:
  \begin{equation*}
    B^{(n)} = \left< \underbrace{A^{(n)} ; A^{(n)} ; \dots ; A^{(n)}}_{n 2^{2 n} \text{ times}} \right> \text{.}
  \end{equation*}
\end{definition*}

By construction, the size of $A^{(n)}$ is 
\[
|A^{(n)}| = |F^{(2^n, n)}| = 2^{n^2}
\]
and the size of $B^{(n)}$ is 
\[
|B^{(n)}| = n 2^{2 n} |A^{(n)}| = n 2^{2 n} 2^{n^2}.
\] 
Notice that, for any given $n$, all terms in $A^{(n)}$ and in $B^{(n)}$ are numbers
 in the set ${\textstyle{ \left \{ 0, \frac{1}{2^n}, \frac{2}{2^n}, \dots, \frac{2^n - 1}{2^n} \right \} \subset [0, 1)}}$.
For example, when $n = 2$:
\begin{equation*}
  \begin{aligned}
    & F^{(4, 2)} = 0, 0, 1, 0, 2, 0, 3, 1, 1, 2, 1, 3, 2, 2, 3, 3 \\
    & A^{(2)} = \frac{0}{4}, \frac{0}{4}, \frac{1}{4}, \frac{0}{4}, \frac{2}{4}, \frac{0}{4}, \frac{3}{4}, \frac{1}{4}, \frac{1}{4}, \frac{2}{4}, \frac{1}{4}, \frac{3}{4}, \frac{2}{4}, \frac{2}{4}, \frac{3}{4}, \frac{3}{4} \\
    & B^{(2)} = \left< \underbrace{A^{(2)} ; \dots ; A^{(2)}}_{32 \text{ times}} \right> = \underbrace{\frac{0}{4}, \frac{0}{4}, \dots, \frac{3}{4}, \frac{3}{4}}_{A^{(2)}}, \dots, \underbrace{\frac{0}{4}, \frac{0}{4}, \dots, \frac{3}{4}, \frac{3}{4}}_{A^{(2)}}
  \end{aligned}
\end{equation*}
and $|A^{(2)}| = 16$, $|B^{(2)}| = 512$.

\begin{definition*}
  Knuth's sequence, denoted by $K$, is the infinite sequence of rational 
numbers resulting from the concatenation of the sequences $B^{(n)}$ for $n = 1, 2, \dots$:
  \begin{equation*}
    K = \left< B^{(1)} ; B^{(2)} ;  B^{(3)} ; \dots \right> \text{.}
  \end{equation*}
\end{definition*}

\begin{theorem*}[Knuth 1965,~\cite{knuth-1965}, page 268]
  The sequence $K$ is completely uniformly distributed.
\end{theorem*}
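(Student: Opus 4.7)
\bigskip
\noindent\textbf{Proof proposal.}
My plan is to fix $k \ge 1$ and verify the uniform-distribution condition directly on a sufficiently rich class of test sets. By approximating any half-open box $I \subseteq [0,1)^k$ from inside and outside by finite unions of \emph{dyadic} boxes, it suffices to consider boxes $I$ whose corners have denominator $2^m$ for some $m \ge 1$; fix such an $I$ throughout.

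The combinatorial core is a clean exact count. For any $n \ge \max(k,m)$, the de Bruijn property of $F^{(2^n, n)}$ forces every $k$-tuple over $\{0, 1, \ldots, 2^n - 1\}$ to appear exactly $2^{n(n-k)}$ times as a cyclic contiguous subsequence. After dividing by $2^n$, each sub-cube $\prod_{j=1}^{k} [a_j/2^n, (a_j+1)/2^n)$ is therefore hit by exactly $2^{n(n-k)}$ cyclic $k$-windows of $A^{(n)}$, and since $I$ is a disjoint union of exactly $|I| \cdot 2^{nk}$ such sub-cubes, the cyclic count of $k$-windows of $A^{(n)}$ lying in $I$ is precisely $|I| \cdot |A^{(n)}|$. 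The block $B^{(n)}$ is $n \, 2^{2n}$ identical copies of $A^{(n)}$ concatenated, and any window starting in $B^{(n)}$ whose endpoint also lies in $B^{(n)}$ (possibly straddling a copy boundary inside $B^{(n)}$) equals a cyclic $k$-window of $A^{(n)}$; so the number of $k$-windows starting in $B^{(n)}$ and lying in $I$ equals $|I| \cdot |B^{(n)}|$ up to an error of at most $k-1$ arising from the final windows that straddle into $B^{(n+1)}$.

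Given arbitrary $N$, let $n_0$ be the block index for which $N$ lies in $B^{(n_0)}$ and let $L_0$ be the offset of $N$ within that block. Applying the same cyclic-count reasoning to the $\floor{L_0 / |A^{(n_0)}|}$ complete copies of $A^{(n_0)}$ inside the partial $B^{(n_0)}$ (with a trivial residual bound), and summing over the preceding full blocks $B^{(1)}, \ldots, B^{(n_0 - 1)}$, yields
\[
  \mathcal{A}(\bar{x}^{(k)}_1, \ldots, \bar{x}^{(k)}_N; I) \;=\; |I| \cdot N \;+\; O\!\left(|A^{(n_0)}|\right) \;+\; O(k \, n_0).
\]
A short calculation gives $|B^{(n_0 - 1)}| = (n_0 - 1) \, 2^{n_0^2 - 1}$, so $N \ge |B^{(n_0 - 1)}|$ forces $|A^{(n_0)}|/N = 2^{n_0^2}/N \le 2/(n_0 - 1)$, which tends to $0$ as $N \to \infty$ (equivalently $n_0 \to \infty$). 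I expect the main obstacle to be the careful bookkeeping of these boundary errors, especially the single incomplete copy of $A^{(n_0)}$ that contains $N$ and the $O(k)$ straddling windows at each block boundary; the decisive feature of the construction is that the multiplier $n \, 2^{2n}$ makes $|B^{(n)}|$ grow fast enough relative to $|A^{(n)}| = 2^{n^2}$ that this boundary contribution is $o(N)$.
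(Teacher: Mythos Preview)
The paper does not supply its own proof of Knuth's theorem; it cites \cite{knuth-1965} and summarizes Knuth's approach in the paragraph following the statement: because the alphabet sizes are powers of $2$, one can reason about the $m$ most-significant bits of each term (the reference to \cite[Lemma~2]{knuth-1965}). Your proposal is exactly that argument, recast in terms of dyadic boxes: restricting to $I$ with corners in $2^{-m}\mathbb{Z}$ makes the cyclic $k$-window count over $A^{(n)}$ \emph{exact} once $n \ge \max(k,m)$, and the reduction to such $I$ is the standard inner/outer approximation. Your bookkeeping is sound, including the computation $|B^{(n_0-1)}| = (n_0-1)\,2^{\,n_0^2-1}$ yielding $|A^{(n_0)}|/N \le 2/(n_0-1)$. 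One small omission: your exact count does not apply to the finitely many initial blocks $B^{(n)}$ with $n < \max(k,m)$; but their total length is a constant depending only on $k$ and $m$, so they add a harmless $O_{k,m}(1)$ to your error term.

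For contrast, the paper's proof of its own Theorem~\ref{thm:l-is-comp-unif-dist} (the linear-alphabet variant $L^{(t)}$) cannot take this shortcut: the alphabets $\{0,\tfrac1n,\ldots,\tfrac{n-1}{n}\}$ are not nested, so no single family of test boxes gives an exact count for all large $n$, and Lemma~\ref{lemma:count-windows-c-sequence-cyclic} must instead carry an error $n^{n-1}(2^k-1)\varepsilon$ that is separately shown to be $o(N)$. Your dyadic reduction is the simpler route, but it is special to the power-of-$2$ construction --- which is precisely the point the paper is making when it says Knuth's argument ``does not hold in our case.''
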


Knuth provides an elementary proof of this theorem. 
Two choices in Knuth's construction 
play an important role in his proof. 
One is that  the number of repetitions of each $A$-sequence within a $B$-sequence is $n 2^{2n}$. 
It follows from Knuth's proof that to ensure complete uniform distribution 
it is sufficient that the number of repetitions of each $A$-sequence within a $B$-sequence
grows faster than~$2^{2n}$.   
For instance,  $\lceil \log n \rceil 2^{2n}$ suffices as well.

The other choice in Knuth's construction is that the  alphabet sizes of the Ford sequences grow exponentially as~$2^n$.
This allows us to reason about the rational numbers comprised in the sequence $K$ 
in terms of their binary representations.  Knuth considers
 the distribution of the $m$ most-significant bits of each of  the terms in a $2^n$-ary Ford sequence, 
where $m \le n$. See~\cite[Lemma 2]{knuth-1965}.

\section{Main result} \label{sec:main-result}

We now present the main contribution of this work, which is a variant of Knuth's construction 
 based on Ford sequences with linearly increasing alphabet sizes.

\begin{definition*}
  Given $n  \in \N$ and a function $t : \mathbb{N} \mapsto \mathbb{N}$, 
let $F^{(n, n)} = f_1, \dots, f_{n^n}$ denote the $n$-ary Ford sequence of order~$n$. 
A $C$-sequence of order $n$, denoted by $C^{(n)}$, is the finite sequence of rational numbers 
obtained by dividing each of the terms in $F^{(n, n)}$ by $n$:
  \begin{equation*}
    C^{(n)} = \frac{f_1}{n}, \frac{f_2}{n}, \dots, \frac{f_{n^n}}{n} = \bigg( \frac{f_i}{n} \bigg)_{i = 1,..,n^n} \text{,}
  \end{equation*}
  and a $D$-sequence of order $n$, denoted by $D^{(n, t)}$,
 is the sequence obtained from the concatenation of $t(n)$ consecutive copies of $C^{(n)}$:
  \begin{equation*}
    D^{(n, t)} = \left< \underbrace{C^{(n)} ; C^{(n)} ; \dots ; C^{(n)}}_{t(n) \text{ times}} \right> \text{.}
  \end{equation*}
\end{definition*}

The size of $C^{(n)}$ is 
\[
|C^{(n)}| = |F^{(n, n)}| = n^n,
\] 
and the size of $D^{(n, t)}$ is 
\[
|D^{(n, t)}| = t(n) |C^{(n)}| = t(n) n^n.
\]
 For any given $n$, all terms in $C^{(n)}$ and in $D^{(n, t)}$ 
are numbers in the set $\textstyle {\left \{ 0, \frac{1}{n}, \frac{2}{n}, \dots, \frac{n - 1}{n} \right \} \subset [0, 1)}$.
The key difference between the way $C$-sequences are constructed when compared to 
$A$-sequences from the previous section is that, as the order of the sequence grows, 
the alphabet size for the underlying Ford sequence grows linearly ($1, 2, 3, 4, \dots$) 
rather than exponentially ($2, 4, 8, 16, \dots$).
For example, when $n = 3$ and $t(n) = n$: 
\begin{equation*}
  \hspace*{-10mm}\begin{aligned}
    & F^{(3, 3)} = 0, 0, 0, 1, 0, 0, 2, 0, 1, 1, 0, 1, 2, 0, 2, 1, 0, 2, 2, 1, 1, 1, 2, 1, 2, 2, 2 \\
    & C^{(3)} = \frac{0}{3}, \frac{0}{3}, \frac{0}{3}, \frac{1}{3}, \frac{0}{3}, \frac{0}{3}, \frac{2}{3}, \frac{0}{3}, \frac{1}{3}, \frac{1}{3}, \frac{0}{3}, \frac{1}{3}, \frac{2}{3}, \frac{0}{3}, \frac{2}{3}, \frac{1}{3}, \frac{0}{3}, \frac{2}{3}, \frac{2}{3}, \frac{1}{3}, \frac{1}{3}, \frac{1}{3}, \frac{2}{3}, \frac{1}{3}, \frac{2}{3}, \frac{2}{3}, \frac{2}{3} \\
    & D^{(3, id)} = \left< C^{(3)} ; C^{(3)} ; C^{(3)} \right> = \underbrace{\frac{0}{3}, \frac{0}{3}, \dots, \frac{2}{3}, \frac{2}{3}}_{C^{(3)}}, \underbrace{\frac{0}{3}, \frac{0}{3}, \dots, \frac{2}{3}, \frac{2}{3}}_{C^{(3)}}, \underbrace{\frac{0}{3}, \frac{0}{3}, \dots, \frac{2}{3}, \frac{2}{3}}_{C^{(3)}}
  \end{aligned}
\end{equation*}
and $|C^{(3)}| = 27$, $|D^{(3, id)}| = 81$.

\begin{definition*}
  Given $t : \mathbb{N} \mapsto \mathbb{N}$, the sequence $L^{(t)}$ is the infinite sequence of real numbers obtained from the concatenation of the sequences $D^{(n, t)}$ for $n = 1, 2, \dots$:
  \begin{equation*}
    L^{(t)} = \left< D^{(1,t)} ; D^{(2,t)} ;  D^{(3,t)} ; \dots \right> \text{.}
  \end{equation*}
\end{definition*}

\begin{theorem}\label{thm:l-is-comp-unif-dist}
  If $t : \mathbb{N} \mapsto \mathbb{N}$ is a non-decreasing function and ${\textstyle{\lim_{n \to \infty} n / t(n) = 0}}$, then the sequence $L^{(t)}$ is completely uniformly distributed.
\end{theorem}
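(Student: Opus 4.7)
Fix $k \ge 1$ and a half-open box $I = \prod_{j=1}^{k}[a_j, b_j) \subseteq [0,1)^k$; by a standard approximation argument in uniform distribution theory it suffices to prove $\#(I,N)/N \to |I|$, where $\#(I,N) := \#\{1 \le i \le N-k+1 : \bar{x}^{(k)}_i \in I\}$.

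The engine of the proof is a density estimate inside a single $D$-block. For $n \ge k$, the Ford sequence $F^{(n,n)}$ is an $n$-ary de Bruijn sequence of order $n$, so each $n$-ary string of length $k$ occurs exactly $n^{n-k}$ times as a cyclic contiguous $k$-substring of $F^{(n,n)}$; equivalently, the cyclic $k$-tuples of $C^{(n)}$ visit each point of the grid $G_n := \{0, 1/n, \ldots, (n-1)/n\}^k$ exactly $n^{n-k}$ times. An elementary grid count for a product box gives $\#(G_n \cap I) = |I|\, n^k + O(k n^{k-1})$. Inside the block $D^{(n,t)}$ of $t(n)$ consecutive identical copies of $C^{(n)}$, a cyclic $k$-tuple of $C^{(n)}$ contributes $t(n)$ times to the concatenation if its window does not wrap past the end of a single copy, and $t(n)-1$ times if it does. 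Combining these two observations, the fraction of $k$-tuples of $D^{(n,t)}$ that lie in $I$ equals $|I| + O(k/n) + O(1/t(n))$.

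To pass from complete blocks to arbitrary $N$, set $S_n := \sum_{m \le n} |D^{(m,t)}|$, locate $n$ with $S_{n-1} < N \le S_n$, and write $N - S_{n-1} = M n^n + r$ with $0 \le M < t(n)$ and $0 \le r < n^n$: the first $N$ terms of $L^{(t)}$ consist of all blocks up to index $n-1$, followed by $M$ full copies of $C^{(n)}$, followed by a tail of length $r$. The $k$-tuples straddling a transition between two blocks of different orders contribute a cumulative $O(kn)$, negligible compared to $N \ge S_{n-1}$. Choose a threshold $\alpha(n)$ with $\alpha(n) \to \infty$ and $\alpha(n)\, n / t(n-1) \to 0$; such an $\alpha$ exists because $n/t(n) \to 0$ implies $t(n-1)/n \to \infty$, and $\alpha(n) := \lfloor \sqrt{t(n-1)/n} \rfloor$ is a concrete choice. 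In the regime $M \ge \alpha(n)$, the per-block estimate applied to the $M$ completed copies of $C^{(n)}$ yields an error $O(k/n + 1/\alpha(n)) \to 0$ in the empirical density on the first $N$ terms. In the regime $M < \alpha(n)$, the active block contributes fewer than $(\alpha(n)+1) n^n$ terms, whereas $N \ge S_{n-1} \ge t(n-1)(n-1)^{n-1}$, so the active block is only an $O(\alpha(n)\, n / t(n-1)) = o(1)$ fraction of $N$, and the limit follows from induction on $n$ using $\#(I, S_{n-1})/S_{n-1} \to |I|$.

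The principal obstacle is this final balancing act. The ratio $S_{n-1}/|D^{(n,t)}|$ is bounded below by roughly $(1-1/n)^n \cdot t(n-1)/t(n)$, so neither the completed blocks nor the active block can be dismissed asymptotically; we must control both at once. The hypothesis $n/t(n) \to 0$ is exactly what provides enough slack to choose a threshold $\alpha(n)$ that tames the intra-block error $1/M$ and the tail ratio $M n^n / S_{n-1}$ simultaneously.
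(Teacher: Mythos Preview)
Your core density estimate---that each cyclic $k$-window of $C^{(n)}$ lies in $I$ with frequency $|I| + O(k/n)$, via the de~Bruijn property and a grid count---is exactly the paper's Lemma~\ref{lemma:count-windows-c-sequence-cyclic}, and it is the only substantive ingredient needed. Where you diverge is in the bookkeeping: you introduce a threshold $\alpha(n)$ and split into two regimes according to whether the number $M$ of completed copies of $C^{(n)}$ is large or small. The paper instead bounds every error contribution directly in a single pass. Writing $N = \sum_{s<r} t(s) s^s + q r^r + p$ (your $n, M, r$ are the paper's $r, q, p$), the paper shows that (i)~the error from the $q$ complete copies satisfies $q r^{r-1}/N \le 1/r$; (ii)~the cumulative error from the completed blocks $D^{(k)}, \ldots, D^{(r-1)}$ is at most $2/(r-1)$, using $\sum_{s \le r-1} s^{s-1} \le 2(r-1)^{r-2}$; and (iii)~the uncontrolled tail satisfies $p/N \le r^r / \bigl(t(r-1)(r-1)^{r-1}\bigr) \le e\, r/t(r-1) \to 0$. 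Each of these bounds holds \emph{uniformly in $q$}, so no case split on $M$ is required; your $\alpha(n)$ device works but is extra machinery for a problem the direct estimate already handles.

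Two small issues in your write-up. First, in the regime $M \ge \alpha(n)$ you invoke the per-block estimate only on the $M$ current copies and claim this bounds the empirical density on \emph{all} $N$ terms; you never say how the completed blocks $D^{(1)}, \ldots, D^{(n-1)}$ are controlled there (they need the same per-block estimate, contributing another $O(k/n)$, after which your stated bound is indeed correct). Second, your motivating lower bound $S_{n-1}/|D^{(n,t)}| \gtrsim (1-1/n)^n\, t(n-1)/t(n)$ is off by a factor of $n$: in fact $S_{n-1}/|D^{(n,t)}| \sim e^{-1} t(n-1)/(n\, t(n))$, which may well tend to zero, so the remark that ``neither part can be dismissed asymptotically'' overstates the obstacle and makes the regime split look more necessary than it is.
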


\begin{exmp*}
  If $t(n) = sq(n) = n^2$, then:
  \begin{equation*}
    L^{(sq)} = \left< C^{(1)} ; \underbrace{C^{(2)} ; \dots ;  C^{(2)}}_{4 \text{ copies}} ;  \underbrace{C^{(3)} ; \dots ;  C^{(3)}}_{9 \text{ copies}} ; \dots \right> \text{,}
  \end{equation*}
  and $L^{(sq)}$ is completely uniformly distributed.
\end{exmp*}

\subsection{Proof of Theorem 1}\label{subsection:proof-of-theorem-1}
Within the current section, let $t : \mathbb{N} \mapsto \mathbb{N}$ 
to be an arbitrary but fixed function. For convenience we write $D^{(n)}$ and $L$ in place of $D^{(n, t)}$ and $L^{(t)}$.
Consider a prefix of $L$ of length $N$, denoted by $L_{1 : N}$. 
Let $p, q, r \in \mathbb{N}$ be the integers determined by $N$ such that:
\begin{equation*}
  L_{1:N} = \left< D^{(1)} ; \dots ; D^{(r - 1)} ; \underbrace{C^{(r)} ; \dots ; C^{(r)}}_{q \text{ times}} ; C^{(r)}_{1:p} \right>
\end{equation*}
where $0 \le q < t(r)$ and $1 \le p \le r^r$.
Here, $r$ is the order of the rightmost, possibly incomplete $D$-sequence present in $L_{1:N}$.
The number $q$ is the amount of complete $C$-sequences of order $r$ appearing before the rightmost,
possibly incomplete $C$-sequence, while $p$ is the amount of terms in it.
Thus,
\begin{equation}\label{equation:n-p-q-r}
  \begin{aligned}
    N & = \sum_{s = 1}^{r - 1} |D^{(s)}| + q |C^{(r)}| + p 
       = \sum_{s = 1}^{r - 1} t(s) s^s + q r^r + p \text{.}
  \end{aligned}
\end{equation}

Let $k$ be a positive integer and 
let $I = [u_1, v_1)  \times \dots \times [u_k, v_k)$ be a set such that $I \subseteq [0, 1)^k$.
Let $N$ range freely over the natural numbers, 
and let the quantity $\nu_N$ denote the number of windows of $L$ of size $k$ starting at indices 
$i = 1 \dots N$ that belong to the set $I$:
\begin{equation*}
  \nu_N = \mathcal{A}(\bar{w}_1, \dots, \bar{w}_N ; I) \text{,}
\end{equation*}
where $\bar{w}_i = (L_i, \dots, L_{i + k - 1})$ for $i = 1 \dots N$.

Consider sufficiently large values of $N$ such that $k < r$. This is always possible since $r$ is an unbounded, non-decreasing function of $N$. We can decompose $L_{1:N}$ into four consecutive sections; namely, sequences $S^{(1)}$, $S^{(2)}$, $S^{(3)}$ and $S^{(4)}$:
\begin{equation*}
  \begin{aligned}
    L_{1:N} & = \left< S^{(1)} ; S^{(2)} ; S^{(3)} ; S^{(4)} \right> \text{, \hspace*{5mm} where} \\
    S^{(1)} & = \left< D^{(1)} ; D^{(2)} ; \dots ; D^{(k - 1)} \right> \\
    S^{(2)} & = \left< D^{(k)} ; D^{(k + 1)} ; \dots ; D^{(r - 1)} \right> \\
    S^{(3)} & = \left< \underbrace{C^{(r)} ; \dots ; C^{(r)}}_{q \text{ times}} \right> \\ 
    S^{(4)} & = C^{(r)}_{1:p} \text{.}
  \end{aligned}
\end{equation*}

Notice that $S^{(1)}$ and $S^{(3)}$ can potentially be empty, such as when $k = 1$ or $q = 0$, respectively.

We denote the cumulative sums of the sizes of the sequences defined above as 
$n_0 = 0$, and $n_j = n_{j - 1} + |S^{(j)}|$ for $j = 1, 2, 3, 4$. 
Now, we can similarly decompose $\nu_N$ into five parts. 
If we let 
\[
\bar{w}_i = (L_i, \dots, L_{i + k - 1})
\]
 for $i = 1 \dots N$, then:
\begin{equation*}
  \begin{aligned}
    \nu_N & = \nu^{(1)}_N + \nu^{(2)}_N + \nu^{(3)}_N + \nu^{(4)}_N + \varepsilon_b \text{, \hspace*{5mm} where} \\
    \nu^{(j)}_N & = \mathcal{A}(\bar{w}_{1 + n_{j - 1}}, \dots, \bar{w}_{n_j - k + 1} ; I)
  \end{aligned}
\end{equation*}
for some $\varepsilon_b \le 3 (k - 1)$.

For each $j = 1, 2, 3, 4$, the quantity $\nu^{(j)}_N$
 accounts for windows contained entirely within the sequence $S^{(j)}$, and $\varepsilon_b$
 accounts for all windows crossing any of the three borders between the four sections. 
This is enough to account for all possible windows, since any given window is either 
entirely contained in some section, or it starts at a given section and ends at a subsequent one, 
thereby crossing a border.

We can rewrite the value of each $\nu^{(j)}_N$ in terms of windows over each sequence~$S^{(j)}$.
 If we let 
\[
\bar{s}_i^{(j)} = (S_i^{(j)}, \dots, S_{i + k - 1}^{(j)})
\]
 for $j = 1, 2, 3, 4$ and $i = 1, \dots, |S^{(j)}| - k + 1$, then:
\begin{equation}\label{equation:nu-n-split-five-parts}
  \nu^{(j)}_N = \mathcal{A}(\bar{s}_1^{(j)}, \dots, \bar{s}_{|S^{(j)}| - k + 1}^{(j)} ; I) \text{.}
\end{equation}

Before obtaining more precise expressions for these quantities, we  state the following three technical propositions.

\begin{proposition}\label{proposition:inequality-sandwich}
  If $n \in \mathbb{N}$ and $x, y \in \mathbb{R}$ such that $[x, y) \subseteq [0, n)$, then the number of integers from the set $\left \{ 0, 1, \dots, n - 1 \right \}$ contained in $\left[x, y\right)$ is equal to $y - x + \varepsilon$ for some $\varepsilon \in (-1, 1)$.
\end{proposition}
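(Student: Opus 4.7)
The plan is to reduce the problem to an elementary calculation with the ceiling function. Since $[x,y)\subseteq[0,n)$, any integer lying in $[x,y)$ automatically belongs to $\{0,1,\dots,n-1\}$, so the quantity to bound is simply the total number of integers in $[x,y)$.

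First I would identify this count in closed form. The smallest integer that is at least $x$ is $\lceil x\rceil$, and the largest integer strictly less than $y$ is $\lceil y\rceil - 1$ (this holds whether or not $y$ is an integer). Hence the number of integers in $[x,y)$ equals
\begin{equation*}
  (\lceil y\rceil - 1) - \lceil x\rceil + 1 = \lceil y\rceil - \lceil x\rceil.
\end{equation*}

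Next I would rewrite this count as $y-x$ plus an error:
\begin{equation*}
  \lceil y\rceil - \lceil x\rceil - (y-x) = (\lceil y\rceil - y) - (\lceil x\rceil - x).
\end{equation*}
Both $\lceil y\rceil - y$ and $\lceil x\rceil - x$ lie in $[0,1)$, so their difference lies in $(-1,1)$. Setting $\varepsilon := (\lceil y\rceil - y) - (\lceil x\rceil - x)$ gives the claimed expression with $\varepsilon \in (-1,1)$.

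There is no real obstacle here; the only mild care needed is handling the boundary cases where $x$ or $y$ happens to be an integer, which is precisely why the formula for the largest integer strictly less than $y$ is $\lceil y\rceil - 1$ rather than $\lfloor y\rfloor$ (these coincide off the integers but one has to pick the form that is uniformly correct). Everything else is a single line of arithmetic.
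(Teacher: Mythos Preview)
Your argument is correct and is essentially the same as the paper's: both compute the count of integers in $[x,y)$ as $\lceil y\rceil-\lceil x\rceil$ (the paper phrases it as subtracting the counts in $[0,x)$ and $[0,y)$) and then write this as $(y-x)+\bigl((\lceil y\rceil-y)-(\lceil x\rceil-x)\bigr)$ with the bracketed error in $(-1,1)$. Your observation that $[x,y)\subseteq[0,n)$ forces every integer in $[x,y)$ to lie in $\{0,\dots,n-1\}$ matches the paper's closing remark.
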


\begin{proof}
  Since $0 \le y$, there are exactly $\lceil y \rceil = y + \varepsilon_y$
 non-negative integers in the set $[0, y)$ for some $\varepsilon_y \in [0, 1)$. 
Similarly for $x$, there are exactly $\lceil x \rceil = x + \varepsilon_x$ 
non-negative integers in the set $[0, x)$ for some $\varepsilon_x \in [0, 1)$. 
The difference between these two quantities is equal to the number of non-negative
 integers contained in the set $[x, y)$, which is $y - x + (\varepsilon_y - \varepsilon_x)$. 
Observing that $(\varepsilon_y - \varepsilon_x) \in (-1, 1)$, and that all non-negative 
integers between $x$ and $y$ belong to the set $\left \{ 0, 1, \dots, n - 1 \right \}$, the proof is complete.
\end{proof}

\begin{proposition}\label{proposition:product-of-sums}
  If $k$ is a positive integer and  $a_1, a_2, \dots, a_k$ and $b_1, b_2, \dots, b_k$ are two sequences 
of real numbers of length~$k$, then the product of their element-by-element sums 
can be expressed as follows:
  \begin{equation*}
    \prod_{d = 1}^{k} a_d + b_d = \prod_{d = 1}^{k} a_d + \mathlarger{\sum}_{j = 1}^{2^k - 1} \left [ \mathlarger{\prod}_{d = 1}^{k}
      \left \{ \begin{array}{lr}
        a_d & \left\lfloor \frac{j}{2^{d - 1}} \right\rfloor \text{is even} \\
        b_d & \text{otherwise}
      \end{array} \right \} \right ] \text{.}
  \end{equation*}
\end{proposition}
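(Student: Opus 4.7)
The plan is to prove the identity by directly expanding the product via the distributive law and then translating the resulting sum over subsets of $\{1,\dots,k\}$ into a sum over integers $j \in \{0,1,\dots,2^k-1\}$ using binary representation. By iterated distribution,
\[
\prod_{d=1}^{k}(a_d+b_d) \;=\; \sum_{S \subseteq \{1,\dots,k\}} \Bigl(\prod_{d \in S} b_d\Bigr)\Bigl(\prod_{d \notin S} a_d\Bigr),
\]
a sum of exactly $2^k$ monomials, each obtained by choosing at every index $d$ either $a_d$ (if $d \notin S$) or $b_d$ (if $d \in S$).

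Next I would invoke the canonical bijection $S \leftrightarrow j := \sum_{d \in S} 2^{d-1}$ between subsets of $\{1,\dots,k\}$ and integers in $\{0,1,\dots,2^k-1\}$. Under this identification, $d \in S$ is equivalent to the $(d-1)$-th binary digit of $j$ being $1$, which is in turn equivalent to $\floor{j/2^{d-1}}$ being odd. Hence the factor appearing at index $d$ in the monomial indexed by $j$ is $a_d$ when $\floor{j/2^{d-1}}$ is even and $b_d$ otherwise, exactly matching the case distinction in the statement.

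To finish, I would isolate the summand corresponding to $S = \emptyset$, i.e.\ $j=0$: since $\floor{0/2^{d-1}} = 0$ is even for every $d$, this term is precisely $\prod_{d=1}^{k} a_d$, and the remaining $2^k - 1$ summands account for the displayed sum over $j = 1,\dots,2^k-1$.

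As for difficulty, there is no real obstacle: the statement is the ordinary distributive law dressed up with a binary encoding of the choice made at each index. The only point requiring care is matching the digit convention used in the statement ($\floor{j/2^{d-1}}$ even versus odd) with the subset-selection viewpoint, and this is routine bookkeeping. An alternative route by induction on $k$ works equally well: writing $\prod_{d=1}^{k+1}(a_d+b_d) = (a_{k+1}+b_{k+1})\prod_{d=1}^{k}(a_d+b_d)$ and applying the inductive hypothesis, multiplication by $a_{k+1}$ corresponds to $j \in \{0,\dots,2^k-1\}$ (bit $k$ equal to $0$) and multiplication by $b_{k+1}$ to $j \in \{2^k,\dots,2^{k+1}-1\}$ (bit $k$ equal to $1$), so the formula propagates.
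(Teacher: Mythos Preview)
Your argument is correct. The distributive expansion
\[
\prod_{d=1}^{k}(a_d+b_d)=\sum_{S\subseteq\{1,\dots,k\}}\prod_{d\notin S}a_d\prod_{d\in S}b_d
\]
together with the bijection $S\mapsto j=\sum_{d\in S}2^{d-1}$ and the observation that $d\in S$ iff $\lfloor j/2^{d-1}\rfloor$ is odd gives the stated formula immediately, with the $j=0$ term split off as $\prod_d a_d$.

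The paper proceeds differently: it argues by induction on $k$, verifying $k=1$ directly and then, for the step, multiplying the inductive hypothesis by $(a_{k+1}+b_{k+1})$ and carefully tracking how the index ranges $j=1,\dots,2^k-1$, $j=2^k$, and $j=2^k+1,\dots,2^{k+1}-1$ arise from the $a_{k+1}$ and $b_{k+1}$ contributions. This is exactly the inductive route you sketch in your final paragraph. Your direct subset-bijection argument is shorter and more transparent, since it explains in one stroke why the binary-digit condition encodes the choice at each factor; the paper's induction recovers the same content but at the cost of some index bookkeeping in the inductive step. Either approach is entirely adequate here.
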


\begin{proof}
  By induction on $k$. First, notice that the property holds for~$k = 1$:
  \begin{equation*}
    \begin{aligned}
      & \prod_{d = 1}^{1} a_d + b_d = a_1 + b_1 \text{, and} \\
      & \prod_{d = 1}^{1} a_d + \mathlarger{\sum}_{j = 1}^{1} \left [ \mathlarger{\prod}_{d = 1}^{1}
      \left \{ \begin{array}{lr}
        a_d & \left\lfloor \frac{j}{2^{d - 1}} \right\rfloor \text{is even} \\
        b_d & \text{otherwise}
      \end{array} \right \} \right ] = a_1 + b_1 \text{.}
    \end{aligned}
  \end{equation*}
  Next, we see that the inductive step holds for any $k$. First,

  \begin{equation*}
    \begin{aligned}
      \hspace*{-5mm}\prod_{d = 1}^{k + 1} a_d + b_d
        & = (a_{k + 1} + b_{k + 1}) \prod_{d = 1}^{k} a_d + b_d \text{, \hspace*{5mm} and by I. H.} \\
        & = (a_{k + 1} + b_{k + 1}) \left[ \prod_{d = 1}^{k} a_d + \mathlarger{\sum}_{j = 1}^{2^k - 1} \left [ \mathlarger{\prod}_{d = 1}^{k}
      \left \{ \begin{array}{lr}
        a_d & \left\lfloor \frac{j}{2^{d - 1}} \right\rfloor \text{is even} \\
        b_d & \text{otherwise}
      \end{array} \right \} \right ] \right] \\
        & = \prod_{d = 1}^{k + 1} a_d + a_{k + 1} \mathlarger{\sum}_{j = 1}^{2^k - 1} \left [ \mathlarger{\prod}_{d = 1}^{k}
        \left \{ \begin{array}{lr}
          a_d & \left\lfloor \frac{j}{2^{d - 1}} \right\rfloor \text{is even} \\
          b_d & \text{otherwise}
        \end{array} \right \} \right ] \\
        & \hspace*{4mm} + b_{k + 1} \prod_{d = 1}^{k} a_d + b_{k + 1} \mathlarger{\sum}_{j = 1}^{2^k - 1} \left [ \mathlarger{\prod}_{d = 1}^{k}
        \left \{ \begin{array}{lr}
          a_d & \left\lfloor \frac{j}{2^{d - 1}} \right\rfloor \text{is even} \\
          b_d & \text{otherwise}
        \end{array} \right \} \right ] \text{.}
    \end{aligned}
  \end{equation*}

Since for every $j = 1 \dots 2^{k} - 1$ the value ${\textstyle \left\lfloor \frac{j}{2^{k}} \right\rfloor = 0}$ 
and is therefore even, we can add the factor $a_{k + 1}$ to the product in 
the second term simply by raising the upper limit to $k + 1$. 
Similarly, in the fourth term we can add the factor $b_{k + 1}$ to the 
product by raising the upper limit to $k + 1$ and changing the limits in the 
sum to $j = (2^k + 1) \dots (2^k + 2^k - 1)$. This is true because adding 
$2^k$ to $j$ does not change the value of 
${\textstyle \left\lfloor \frac{j}{2^{d - 1}} \right\rfloor}$ for any $d \le k$, 
but when $d = k + 1$ the value ${\textstyle \left\lfloor \frac{j}{2^{k}} \right\rfloor = 1}$ 
and is therefore odd. The third term can be rewritten as a similar product 
for a value of $j = 2^k$, and substituting into the equation above:
  \begin{equation*}
    \begin{aligned}
      \prod_{d = 1}^{k + 1} a_d + b_d
        & = \prod_{d = 1}^{k + 1} a_d + \mathlarger{\sum}_{j = 1}^{2^k - 1} \left [ \mathlarger{\prod}_{d = 1}^{k + 1}
        \left \{ \begin{array}{lr}
          a_d & \left\lfloor \frac{j}{2^{d - 1}} \right\rfloor \text{is even} \\
          b_d & \text{otherwise}
        \end{array} \right \} \right ] \\
        & \hspace*{4mm} + \mathlarger{\sum}_{j = 2^k}^{2^k} \left [ \mathlarger{\prod}_{d = 1}^{k + 1}
        \left \{ \begin{array}{lr}
          a_d & \left\lfloor \frac{j}{2^{d - 1}} \right\rfloor \text{is even} \\
          b_d & \text{otherwise}
        \end{array} \right \} \right ] \\
        & \hspace*{4mm} + \mathlarger{\sum}_{j = 2^k + 1}^{2^k + 2^k - 1} \left [ \mathlarger{\prod}_{d = 1}^{k}
        \left \{ \begin{array}{lr}
          a_d & \left\lfloor \frac{j}{2^{d - 1}} \right\rfloor \text{is even} \\
          b_d & \text{otherwise}
        \end{array} \right \} \right ] \\
        & = \prod_{d = 1}^{k + 1} a_d + \mathlarger{\sum}_{j = 1}^{2^{k + 1} - 1} \left [ \mathlarger{\prod}_{d = 1}^{k + 1}
        \left \{ \begin{array}{lr}
          a_d & \left\lfloor \frac{j}{2^{d - 1}} \right\rfloor \text{is even} \\
          b_d & \text{otherwise}
        \end{array} \right \} \right ] \text{,}
    \end{aligned}
  \end{equation*}
  which completes the proof.
\end{proof}

\begin{proposition}\label{proposition:sum-i-to-the-i-m-1}
  Given $n \in \mathbb{N}$, the following holds:
  \begin{equation*}
    \sum_{i = 1}^{n} i^{i - 1} \le 2 n^{n - 1} \text{.}
  \end{equation*}
\end{proposition}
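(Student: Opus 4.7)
The plan is to exploit the fact that the function $i \mapsto i^{i-1}$ is non-decreasing on the positive integers, so that the last term $n^{n-1}$ already dominates the sum of all preceding terms.

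First I would handle the trivial base case $n = 1$, where the inequality reduces to $1 \le 2$. For $n \ge 2$, I would split the sum as
\[
  \sum_{i = 1}^{n} i^{i - 1} \;=\; n^{n-1} \;+\; \sum_{i = 1}^{n - 1} i^{i - 1},
\]
and then aim to show that the tail $\sum_{i=1}^{n-1} i^{i-1}$ is itself at most $n^{n-1}$, which immediately yields the desired bound $2n^{n-1}$.

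The monotonicity claim I would use is that $i^{i-1} \le (i+1)^{i}$ for all $i \ge 1$, which follows from the estimate $(i+1)^{i}/i^{i-1} = (i+1)\cdot((i+1)/i)^{i-1} \ge i+1 \ge 1$. Given this, every term in the tail is at most $(n-1)^{n-2}$, so
\[
  \sum_{i=1}^{n-1} i^{i-1} \;\le\; (n-1)\cdot (n-1)^{n-2} \;=\; (n-1)^{n-1} \;\le\; n^{n-1},
\]
where the last inequality is clear since the base and exponent both increase. Adding back the $n^{n-1}$ term finishes the proof.

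There is no real obstacle here; the only care needed is to avoid the degenerate expression $(n-1)^{n-2}$ when $n = 1$, which is why I would dispatch $n = 1$ as a base case before running the monotonicity argument for $n \ge 2$.
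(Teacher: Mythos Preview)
Your argument is correct. The paper proceeds instead by induction on $n$: it checks $n=1,2$ by hand and, for the inductive step, uses
\[
  \sum_{i=1}^{n+1} i^{i-1} \;\le\; 2n^{n-1} + (n+1)^n \;\le\; n\cdot n^{n-1} + (n+1)^n \;\le\; 2(n+1)^n,
\]
valid for $n\ge 2$. Your route is genuinely different: rather than induct, you bound the tail $\sum_{i=1}^{n-1} i^{i-1}$ directly by $(n-1)\cdot(n-1)^{n-2}=(n-1)^{n-1}\le n^{n-1}$ via monotonicity of $i\mapsto i^{i-1}$. Both arguments are short; yours avoids the inductive machinery and makes the looseness of the constant $2$ more transparent, while the paper's induction sidesteps the need to justify monotonicity separately.
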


\begin{proof}
  By induction on $n$. The property holds for $n = 1$ and $n = 2$:
  \begin{equation*}
    \sum_{i = 1}^{1} i^{i - 1} \le 2 \text{, } \sum_{i = 1}^{2} i^{i - 1} \le 4
  \end{equation*}
  and the inductive step holds for $n \ge 2$:
  \begin{equation*}
    \sum_{i = 1}^{n + 1} i^{i - 1} = \underbrace{\sum_{i = 1}^{n} i^{i - 1}}_{
      {
        \begin{aligned}
          & \text{\small $\le 2 n^{n - 1}$} \\
          & \text{\small by I. H.}
        \end{aligned}
      }%
    } + (n + 1)^{n} \le n n^{n - 1} + (n + 1)^{n} \le 2 (n + 1)^{n} \text{.}
  \end{equation*}

  Therefore, the property holds for all $n \in \mathbb{N}$.
\end{proof}

We now obtain an expression for the number of windows of a $C$-sequence which are contained in the set $I$. 
This is useful for evaluating $\nu_N$. 

\begin{lemma}\label{lemma:count-windows-c-sequence-cyclic}
  Given a positive integer $k$ and a set $I = [u_1, v_1) \times \dots \times [u_k, v_k)$ where $I \subseteq [0, 1)^k$, let $n \in \mathbb{N}$ such that $k \le n$ and consider the sequence $C^{(n)}$ as a cyclic sequence. If we let $\bar{c}_i = (C^{(n)}_i, \dots, C^{(n)}_{i + k - 1})$ for $i = 1, \dots, n^n$, then the number of windows of size $k$ from the sequence $C^{(n)}$ that lie in $I$ is:
  \begin{equation*}
    \mathcal{A}(\bar{c}_1, \dots, \bar{c}_{n^n} ; I) = n^n |I| + n^{n - 1} (2^k - 1) \varepsilon \text{.}
  \end{equation*}
  for some $\varepsilon \in (-1, 1)$.
\end{lemma}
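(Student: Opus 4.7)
The plan is to exploit the de Bruijn property of $F^{(n,n)}$ to reduce the window-count to a product of one-dimensional integer counts, and then apply Propositions~\ref{proposition:inequality-sandwich} and~\ref{proposition:product-of-sums} in sequence.

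First I would observe that, by the defining property of a de Bruijn sequence of order $n$ over an alphabet of size $n$, every $n$-tuple over $\{0,1,\ldots,n-1\}$ occurs exactly once as a cyclic window of $F^{(n,n)}$. From this it follows that for every $k\le n$ and every $k$-tuple $(a_1,\ldots,a_k)\in\{0,\ldots,n-1\}^k$, the number of cyclic $k$-windows of $F^{(n,n)}$ equal to $(a_1,\ldots,a_k)$ is exactly $n^{n-k}$, since there are exactly $n^{n-k}$ ways to extend such a tuple to an $n$-tuple. Dividing by $n$, each rational tuple $(a_1/n,\ldots,a_k/n)$ appears exactly $n^{n-k}$ times as a cyclic window of $C^{(n)}$.

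Next I would rewrite the condition $\bar c_i\in I$ coordinatewise: for a $k$-tuple coming from integers $(a_1,\ldots,a_k)$, membership in $I$ is equivalent to $u_d n\le a_d<v_d n$ for every $d=1,\ldots,k$. By Proposition~\ref{proposition:inequality-sandwich}, the number of integers $a_d\in\{0,1,\ldots,n-1\}$ satisfying this one-dimensional condition equals $(v_d-u_d)n+\varepsilon_d$ for some $\varepsilon_d\in(-1,1)$. Because the $k$ coordinates are independent, the total number of integer $k$-tuples in $I$ is the product
\[
\prod_{d=1}^{k}\bigl((v_d-u_d)n+\varepsilon_d\bigr),
\]
and the number of cyclic $k$-windows of $C^{(n)}$ lying in $I$ is $n^{n-k}$ times this product.

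Then I would expand the product using Proposition~\ref{proposition:product-of-sums} with $a_d=(v_d-u_d)n$ and $b_d=\varepsilon_d$. The ``all-$a_d$'' term gives $n^k\prod_d(v_d-u_d)=n^k|I|$, contributing $n^{n-k}\cdot n^k|I|=n^n|I|$ after multiplication. Each of the remaining $2^k-1$ mixed terms contains at least one factor $\varepsilon_d\in(-1,1)$ and at most $k-1$ factors of the form $(v_d-u_d)n$, each bounded in absolute value by $n$ since $v_d-u_d\le 1$. Hence every mixed term has absolute value strictly less than $n^{k-1}$, and the sum of the $2^k-1$ mixed terms has absolute value strictly less than $(2^k-1)n^{k-1}$. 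After multiplying by $n^{n-k}$, the error contribution is of the form $n^{n-1}(2^k-1)\varepsilon$ with $\varepsilon\in(-1,1)$, yielding exactly the stated formula.

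The only subtle point, and the one I would flag as the main obstacle, is the strict bound: each mixed term must be strictly less than $n^{k-1}$ in absolute value, so that the whole remainder is strictly less than $(2^k-1)n^{k-1}$ and $\varepsilon$ lands in the open interval $(-1,1)$. This follows because every mixed term carries at least one factor $\varepsilon_d$ with $|\varepsilon_d|<1$ strict, while the other factors $(v_d-u_d)n$ are bounded by $n$; the sum of finitely many quantities each strictly less than $n^{k-1}$ is strictly less than $(2^k-1)n^{k-1}$. Everything else is routine bookkeeping.
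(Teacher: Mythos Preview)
Your proposal is correct and follows essentially the same route as the paper: translate the membership condition to integer inequalities on $F^{(n,n)}$, invoke the de~Bruijn property to get the $n^{n-k}$ multiplicity, use Proposition~\ref{proposition:inequality-sandwich} coordinatewise, and expand the resulting product via Proposition~\ref{proposition:product-of-sums}. The only cosmetic difference is that the paper first pulls out $n^n$ and applies Proposition~\ref{proposition:product-of-sums} with $a_d=(v_d-u_d)$, $b_d=\varepsilon_d/n$, whereas you keep $a_d=(v_d-u_d)n$, $b_d=\varepsilon_d$ and multiply by $n^{n-k}$ afterward; the strictness argument for $\varepsilon\in(-1,1)$ goes through either way.
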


\begin{proof}
  First, notice that any given window is contained in the set $I$ if and only if the following is true:
  \begin{equation*}
    \begin{aligned}
        \begin{array}{cccc}
          \bar{c}_i \in I \Longleftrightarrow & u_1 \le & C^{(n)}_{i}         & < v_1 \\
                              & & \vdots & \\
                              & u_k \le & C^{(n)}_{i + k - 1} & < v_k
        \end{array}
    \end{aligned}
  \end{equation*}
  where $i = 1 \dots n^n$ and indices are taken modulo $n^n$.

   Since all terms in $C^{(n)}$ are numbers in the set ${\textstyle \left \{ 0, \frac{1}{n}, \dots, \frac{n - 1}{n} \right \}}$, we multiply both sides of each inequality by $n$, allowing us to reason about integers belonging to a Ford sequence instead of rational numbers. We obtain the following:
  \begin{equation*}
    \begin{aligned}
        \begin{array}{cccc}
          \bar{c}_i \in I \Longleftrightarrow & n u_1 \le & F^{(n, n)}_{i}         & < n v_1 \\
                              & & \vdots & \\
                              & n u_k \le & F^{(n, n)}_{i + k - 1} & < n v_k \text{.}
        \end{array}
    \end{aligned}
  \end{equation*}
  According to  Proposition~\ref{proposition:inequality-sandwich}, for each inequality above with $d = 1 \dots k$ there are exactly $n v_d - n u_d + \varepsilon_d$ possible solutions in the set $ \left \{ 0, 1, \dots, n - 1 \right \} $ for some value $\varepsilon_d \in (-1, 1)$. 
This yields a total of ${\textstyle \prod_{d = 1}^{k} [n (v_d - u_d) + \varepsilon_d]}$ 
possible solutions to the system of inequalities. Each solution, when seen as an $n$-ary sequence of length~$k$, 
appears exactly $n^{n - k}$ times as a contiguous subsequence in $F^{(n, n)}$. 
This is true because there are  $n^{n - k}$ ways of extending an $n$-ary sequence of 
length $k$ to one of length $n$ and, by construction, each of these appears exactly once in $F^{(n, n)}$ 
when viewed as a cycle. Since $i$ ranges exactly once over each possible window of $F^{(n, n)}$, then:
  \begin{equation}\label{equation:count-windows-c-n}
    \begin{aligned}
      \mathcal{A}(\bar{c}_1, \dots, \bar{c}_{n^n} ; I) & = n^{n - k} \prod_{d = 1}^{k} [n (v_d - u_d) + \varepsilon_d] 
       = n^n \prod_{d = 1}^{k} [(v_d - u_d) + \varepsilon_d / n] \text{.}
    \end{aligned}
  \end{equation}
  Using Proposition~\ref{proposition:product-of-sums}, we can expand this into the following:
  \begin{equation}\label{equation:product-expansion}
    \begin{aligned}
      n^n \prod_{d = 1}^{k} [(v_d - u_d) + \varepsilon_d / n]
      & = n^n \prod_{d = 1}^{k} (v_d - u_d) \\
      & \hspace*{4mm} + n^n \mathlarger{\sum}_{j = 1}^{2^k - 1}
      \left [ \mathlarger{\prod}_{d = 1}^{k} \left \{ \begin{array}{lr}
      (v_d - u_d) & \left\lfloor \frac{j}{2^{d - 1}} \right\rfloor \text{is even} \\
      \varepsilon_d / n & \text{otherwise}
    \end{array} \right \} \right ] \text{.}
    \end{aligned}
  \end{equation}

  If we define $\varepsilon'_j$ for $j = 1 \dots 2^k - 1$ as:
  \begin{equation*}
    \varepsilon'_j / n = \mathlarger{\prod}_{d = 1}^{k} \left \{ \begin{array}{lr}
      (v_d - u_d) & \left\lfloor \frac{j}{2^{d - 1}} \right\rfloor \text{is even} \\
      \varepsilon_d / n & \text{otherwise}
    \end{array} \right \}
  \end{equation*}
  then, for each $j$, the value $\varepsilon'_j \in (-1, 1)$. This is true because the product on the right-hand side is composed of terms $(v_d - u_d) \in (-1, 1)$ and $\varepsilon_d / n \in (-1/n, 1/n)$ and, since $j > 0$, 
there is always at least one term of the second kind. 
Given that ${\textstyle{|I| = \prod_{d = 1}^{k} (v_d - u_d)}}$, 
we can further simplify equation~\eqref{equation:product-expansion} to get:
 
 \begin{equation}\label{equation:sum-of-epsilons}
    n^n \prod_{d = 1}^{k} [(v_d - u_d) + \varepsilon_d / n] = n^n |I| + n^n \sum_{j = 1}^{2^k - 1} \varepsilon'_j / n \text{.}
  \end{equation}
  Finally, since ${\textstyle - (2^k - 1) < \sum_{j = 1}^{2^k - 1} \varepsilon'_j < (2^k - 1)}$, there exists some $\varepsilon \in (-1, 1)$ such that:
  \begin{equation*}
    \mathlarger{\sum}_{j = 1}^{2^k - 1} \varepsilon'_j = (2^k - 1) \varepsilon
  \end{equation*}
  and hence, by~\eqref{equation:count-windows-c-n} and~\eqref{equation:sum-of-epsilons}:
  \begin{equation*}
    \mathcal{A}(\bar{c}_1, \dots, \bar{c}_{n^n} ; I) = n^n |I| + n^{n - 1} (2^k - 1) \varepsilon \text{.}
  \end{equation*}
\end{proof}

We can now give the proof of our main result.

\begin{proof}[Proof of Theorem 1]
 Let $k$ be a positive integer and let $I = [u_1, v_1) \times \dots \times [u_k, v_k)$ be
 a set such that $I \subseteq [0, 1)^k$.
Let $N$ range freely over the natural numbers. 
We now obtain an expression for $\nu_N / N$ and compute its limit when $N \to \infty$. 

Recall the following definitions:
  \begin{equation*}
    \begin{aligned}
      \nu^{(2)}_N & = \mathcal{A}(\bar{s}_1^{(2)}, \dots, \bar{s}_{|S^{(2)}| - k + 1}^{(2)} ; I) \\
      \nu^{(3)}_N & = \mathcal{A}(\bar{s}_1^{(3)}, \dots, \bar{s}_{|S^{(3)}| - k + 1}^{(3)} ; I) \\
      S^{(2)} & = \left< D^{(k)} ; D^{(k + 1)} ; \dots ; D^{(r - 1)} \right> \\
      S^{(3)} & = \left< \underbrace{C^{(r)} ; \dots ; C^{(r)}}_{q \text{ times}} \right> \text{.}
    \end{aligned}
  \end{equation*}
  where 
\[
\bar{s}_i^{(j)} = (S_i^{(j)}, \dots, S_{i + k - 1}^{(j)})
\]
 for $j = 2, 3$ and $ i = 1, \dots, |S^{(j)}| - k + 1$.

The sequences $S^{(2)}$ and $S^{(3)}$ are entirely composed of complete $C$-sequences 
of increasing orders which are larger than or equal to~$k$. Moreover, with the exception of the last, 
rightmost instance in each of $S^{(2)}$ and $S^{(3)}$, 
every single $C$-sequence is immediately succeeded by another $C$-sequence of the same or the following order, 
including those which are part of a $D$-sequence. 
Additionally, any window starting at the right-hand end of a $C$-sequence 
necessarily finishes within the first $k - 1$ elements of the following $C$-sequence, 
all of which are guaranteed to be~$0$.

  Therefore, the amount of windows of size $k$ contained in $I$ ranging over 
$S^{(2)}$ and $S^{(3)}$ is equal to the sum over each composing $C$-sequence \textit{viewed as a cycle}, 
with an error of at most $k - 1$ due to the fact that we are counting only windows 
entirely contained within each sequence. If we let 
\[
\bar{c}^{(s)}_i = (C^{(s)}_i, \dots, C^{(s)}_{i + k - 1})
\]
 for $s = k \dots r$ and $i = 1, \dots, s^s$, then:
  \begin{equation*}
    \begin{aligned}
      \nu^{(2)}_N & = \sum_{s = k}^{r - 1} \underbrace{\left[ t(s) \mathcal{A}(\bar{c}^{(s)}_1, \dots, \bar{c}^{(s)}_{s^s} ; I) \right]}_{\text{$C$-sequences contained in $D^{(s)}$}} + \varepsilon_{\nu^{(2)}_N} \\
      \nu^{(3)}_N & = q \mathcal{A}(\bar{c}^{(r)}_1, \dots, \bar{c}^{(r)}_{r^r} ; I)  + \varepsilon_{\nu^{(3)}_N}
    \end{aligned}
  \end{equation*}
  for some values $\varepsilon_{\nu^{(2)}_N} \le k - 1$, and $\varepsilon_{\nu^{(3)}_N} \le k - 1$.
  From Lemma~\ref{lemma:count-windows-c-sequence-cyclic}:
  \begin{equation*}
    \begin{aligned}
      \nu^{(2)}_N & = \sum_{s = k}^{r - 1} \left[ t(s) \left( s^s |I| + s^{s - 1} (2^k - 1) \varepsilon_s \right) \right] + \varepsilon_{\nu^{(2)}_N} \\
      \nu^{(3)}_N & = q \left( r^r |I| + r^{r - 1} (2^k - 1) \varepsilon_r \right)  + \varepsilon_{\nu^{(3)}_N}
    \end{aligned}
  \end{equation*}
  for some values of $\varepsilon_i \in (-1, 1)$, $i = k \dots r$.
  Substituting back into $\nu_N$ from equation~\eqref{equation:nu-n-split-five-parts}:
  \begin{equation*}
    \begin{aligned}
      \nu_N
        & = \nu^{(1)}_N \\
        & \hspace*{4mm} + \sum_{s = k}^{r - 1} \left[ t(s) \left( s^s |I| + s^{s - 1} (2^k - 1) \varepsilon_s \right) \right] + \varepsilon_{\nu^{(2)}_N} \\
        & \hspace*{4mm} + q \left( r^r |I| + r^{r - 1} (2^k - 1) \varepsilon_r \right)  + \varepsilon_{\nu^{(3)}_N} \\
        & \hspace*{4mm} + \nu^{(4)}_N + \varepsilon_b
    \end{aligned}
  \end{equation*}
  and factoring out terms multiplied by $|I|$, we get:

  \begin{equation*}
    \begin{aligned}
      \nu_N
        & = |I| \left[ \sum_{s = k}^{r - 1} t(s) s^s + q r^r \right] \\
        & \hspace*{4mm} + \nu^{(1)}_N \\
        & \hspace*{4mm} + \sum_{s = k}^{r - 1} \left[ t(s) s^{s - 1} (2^k - 1) \varepsilon_s \right] + \varepsilon_{\nu^{(2)}_N} \\
        & \hspace*{4mm} + q r^{r - 1} (2^k - 1) \varepsilon_r  + \varepsilon_{\nu^{(3)}_N} \\
        & \hspace*{4mm} + \nu^{(4)}_N + \varepsilon_b \text{.}
    \end{aligned}
  \end{equation*}
  We now rewrite the first term using the relationship between $p$, $r$, $q$ and $N$ from equation~\eqref{equation:n-p-q-r}:
  \begin{equation*}
    \begin{aligned}
      \nu_N
        & = |I| \left[ N - \sum_{s = 1}^{k - 1} t(s) s^s - p \right] \\
        & \hspace*{4mm} + \nu^{(1)}_N \\
        & \hspace*{4mm} + \sum_{s = k}^{r - 1} \left[ t(s) s^{s - 1} (2^k - 1) \varepsilon_s \right] + \varepsilon_{\nu^{(2)}_N} \\
        & \hspace*{4mm} + q r^{r - 1} (2^k - 1) \varepsilon_r  + \varepsilon_{\nu^{(3)}_N} \\
        & \hspace*{4mm} + \nu^{(4)}_N + \varepsilon_b.
    \end{aligned}
  \end{equation*}
  After dividing both sides by $N$ and rearranging terms we obtain:
  \begin{equation*}
    \begin{aligned}
      \frac{\nu_N}{N} - |I|
        & = \frac{p}{N} \left[ \frac{\nu^{(4)}_N}{p} - |I| \right] \\
        & \hspace*{4mm} + \frac{2^k - 1}{N} \left[ \sum_{s = k}^{r - 1} t(s) s^{s - 1} \varepsilon_s + q r^{r - 1} \varepsilon_r \right] \\
        & \hspace*{4mm} + \frac{1}{N} \left[ \nu^{(1)}_N - |I| \sum_{s = 1}^{k - 1} t(s) s^s + \varepsilon_{\nu^{(2)}_N} + \varepsilon_{\nu^{(3)}_N} + \varepsilon_b \right] \text{.}
    \end{aligned}
  \end{equation*}
  Taking limits on both sides as $N \to \infty$, the third term on the right-hand side approaches $0$ since the contents of the brackets are dependent on $k$ and bounded as a function of $N$. 
For  the second term, using the fact that $t$ is non-decreasing together 
with Proposition~\ref{proposition:sum-i-to-the-i-m-1}, we can see that:
 \[
      \frac{\sum\limits_{s = k}^{r - 1} t(s) s^{s - 1} \varepsilon_s}{N} \le \frac{t(r - 1) \sum\limits_{s = 1}^{r - 1} s^{s - 1}}{t(r - 1) (r - 1)^{(r - 1)}} \le \frac{2 (r - 1)^{(r - 2)}}{(r - 1)^{(r - 1)}} = \frac{2}{r - 1}
\]
and
\[
 \frac{q r^{r - 1} \varepsilon_r}{N} \le \frac{q r^{r - 1}}{q r^r} = \frac{1}{r}.
\]
Since $r$ is an unbounded, non-decreasing function of $N$, this  term approaches~$0$ as well.

  Finally, consider the first term on the right-hand side. 
 Since both ${\textstyle \frac{\nu^{(4)}_N}{p}}$ and $|I|$ are values between $0$ and $1$,
then ${\textstyle \left( \frac{\nu^{(4)}_N}{p} - |I| \right) \in [-1, 1]}$. 
Moreover, 
using the following identity:
  \begin{equation*}
    \frac{(x + 1)^{(x + 1)}}{x^x} = (x + 1) \left(1 + \frac{1}{x} \right)^x
  \end{equation*}
  with $x = r - 1$, 
and using that  $p \le r^r$ ,
we can see that for large values of $r$:
  \begin{equation*}
    \frac{p}{N} \le \frac{r^r}{t(r - 1) (r - 1)^{(r - 1)}} = \frac{r}{t(r - 1)} \left( 1 + \frac{1}{r - 1} \right)^{r - 1} \le \frac{r}{t(r - 1)} e
  \end{equation*}
  which by hypothesis also approaches $0$ as $N \to \infty$. Hence,
  \begin{equation*}
    \lim_{N \to \infty} \frac{\nu_N}{N} = |I|.
  \end{equation*}
Since $k$ and $I$ were chosen arbitrarily, the sequence $L$ is completely uniformly distributed and the proof of Theorem 1 is complete.
\end{proof}

\subsection{Alternative Proof of Theorem 1}

Weyl's criterion for the multidimensional case states that if $\bar{X} = \bar{x}_1, \bar{x}_2, \dots$ 
is a sequence of $k$-dimensional vectors of real numbers, then $\bar{X}$ 
is uniformly distributed in the unit cube if and only if for all non-zero $k$-dimensional 
vectors of integers $\bar{\ell} = (l_1, \dots, l_k)$:
\begin{equation*}
  \lim_{N \to \infty} \frac{1}{N} \sum_{n = 1}^{N} e^{2 \pi i \bar{\ell} \cdot \bar{x}_n} = 0 \text{,}
\end{equation*}
where $\bar{\ell} \cdot \bar{x}_n$ denotes the dot product of $\bar{\ell}$ and $\bar{x}_n$,
see~\cite{KN}.
Before applying Weyl's criterion to the sequence $L^{(t)}$, we first prove the following lemma
based on a well-known fact about the roots of unity.

\begin{lemma}\label{lemma:weyl-sum-over-cyclic-c-sequences}
  Given a positive integer $k$ and a non-zero $k$-dimensional vector of integers $\bar{\ell} = (l_1, \dots, l_k)$, 
let $n \in \mathbb{N}$ such that $n > \max\left(k, \min\left(|l_1|, \dots, |l_k|\right)\right)$ and consider the sequence $C^{(n)}$ as a cyclic sequence. If we let $\bar{c}_j = (C^{(n)}_j, \dots, C^{(n)}_{j + k - 1})$ for $j = 1, \dots, n^n$, then:
  \begin{equation}\label{equation:weyl-sum-over-cyclic-c-sequence}
    \sum_{j = 1}^{n^n} e^{2 \pi i \bar{\ell} \cdot \bar{c}_j} = 0.
  \end{equation}
\end{lemma}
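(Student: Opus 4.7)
The plan is to turn the cyclic exponential sum into a complete sum over all $k$-tuples in $\{0,\dots,n-1\}^k$, factor that as a product of independent one-variable character sums, and observe that at least one factor vanishes by the standard identity for a complete sum of $n$-th roots of unity.

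First, writing $\omega = e^{2\pi i / n}$ and recalling $C^{(n)}_j = F^{(n,n)}_j / n$, each term of the sum becomes
\[
  e^{2\pi i\,\bar{\ell}\cdot\bar{c}_j} \;=\; \omega^{\,l_1 F^{(n,n)}_j + \dots + l_k F^{(n,n)}_{j+k-1}} \;=\; \prod_{d=1}^{k} \omega^{\,l_d F^{(n,n)}_{j+d-1}}.
\]
Next, because $n > k$ and $F^{(n,n)}$ is an $n$-ary de Bruijn sequence of order $n$, viewed cyclically each tuple in $\{0,\dots,n-1\}^k$ appears as a contiguous window of length $k$ in exactly $n^{n-k}$ starting positions (every such $k$-tuple extends to $n^{n-k}$ distinct $n$-tuples, each of which occurs exactly once in the cycle). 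Grouping the sum by the $k$-tuple each window realizes and then performing a Fubini-style factorization gives
\[
  \sum_{j=1}^{n^n} e^{2\pi i\,\bar{\ell}\cdot\bar{c}_j}
  \;=\; n^{n-k} \sum_{(a_1,\dots,a_k) \in \{0,\dots,n-1\}^k} \prod_{d=1}^{k} \omega^{\,l_d a_d}
  \;=\; n^{n-k} \prod_{d=1}^{k} \left( \sum_{a=0}^{n-1} \omega^{\,l_d a} \right).
\]

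To finish, I would invoke the classical identity that $\sum_{a=0}^{n-1} \omega^{\,l a}$ equals $n$ when $n \mid l$ and $0$ otherwise (geometric series combined with $\omega^n = 1$). Since $\bar{\ell}$ is non-zero, some coordinate is non-zero, and the hypothesis $n > \min(|l_1|,\dots,|l_k|)$ lets me select an index $d^{*}$ with $0 < |l_{d^{*}}| < n$; in particular $n \nmid l_{d^{*}}$, so that factor vanishes and the whole product is zero.

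The main obstacle is really only bookkeeping: one must justify the de Bruijn counting step cleanly (keeping the cyclic indexing straight, and invoking $k \le n$ so that each $k$-tuple does extend to a proper $n$-tuple) and pin down an index producing a vanishing factor from the stated hypothesis on $n$. Everything beyond those two points reduces to the Fubini factorization and the elementary computation of a complete sum of $n$-th roots of unity.
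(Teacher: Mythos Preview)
Your argument is correct in strategy and cleaner than the paper's. Both of you reduce, via the de Bruijn window count, to the complete sum $n^{n-k}\sum_{\bar\gamma\in\{0,\dots,n-1\}^k} e^{2\pi i\,\bar\ell\cdot\bar\gamma/n}$, and then diverge. You factor this as $\prod_{d=1}^{k}\sum_{a=0}^{n-1}\omega^{l_d a}$ and kill one factor with the geometric-series identity. The paper instead groups the terms by the residue $r\equiv\bar\ell\cdot\bar\gamma\pmod n$, invokes a counting result for the linear congruence $\bar\ell\cdot\bar\gamma\equiv r$ (there are $g\,n^{k-1}$ solutions whenever $g=\gcd(l_1,\dots,l_k,n)$ divides $r$, and none otherwise), and is left with a complete sum of $(n/g)$-th roots of unity. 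Your factorization avoids the external number-theory citation and is more direct; the paper's route has the mild advantage of isolating explicitly the single quantity $g$ that governs vanishing.

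One wrinkle in your final step: from $n>\min(|l_1|,\dots,|l_k|)$ you cannot always select $d^{*}$ with $0<|l_{d^{*}}|<n$. If some coordinate of $\bar\ell$ is zero the minimum is $0$, the hypothesis becomes vacuous, and the nonzero coordinates may all be multiples of $n$ (e.g.\ $k=2$, $\bar\ell=(n,0)$: every factor in your product equals $n$ and the whole sum is $n^{n}\neq 0$). The paper's proof has the mirror-image gap---its closing inequality $\min(|l_1|,\dots,|l_k|)\ge g$ fails for exactly the same reason---so this is a defect in the lemma's stated hypothesis rather than in your method. Replacing $\min_d|l_d|$ by the minimum over the \emph{nonzero} coordinates of $\bar\ell$ (equivalently, requiring that $n\nmid l_d$ for some $d$) repairs both arguments and the lemma itself.
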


\begin{proof}
  If we view $F^{(n, n)}$ as a cyclic sequence and let 
\[
\bar{f}_j = (F^{(n, n)}_j, \dots, F^{(n, n)}_{j + k - 1})
\]
 for $j = 1, \dots, n^n$, then from the definition of a 
$C$-sequence it follows that ${\textstyle \bar{c}_j = \frac{1}{n} \bar{f}_j}$.
  Then, if we define $\Gamma = \left \{ 0, 1, \dots, n - 1 \right \} $, 
every $\bar{\gamma} \in \Gamma^k$ appears exactly $n^{n - k}$ times as a contiguous subsequence of $F^{(n, n)}$.
 This is true because there are  $n^{n - k}$ 
ways of extending an $n$-ary sequence of length $k$ to one of length $n$ and, 
by construction, each of these appears exactly once in $F^{(n, n)}$ when viewed as a cycle.
  Substituting into the left-hand side of equation~\eqref{equation:weyl-sum-over-cyclic-c-sequence}:
  \begin{equation*}
    \sum_{j = 1}^{n^n} e^{2 \pi i \bar{\ell} \cdot (\frac{1}{n} \bar{f}_j)} = \sum_{j = 1}^{n^n} e^{\frac{2 \pi i}{n} \bar{\ell} \cdot \bar{f}_j} = n^{n - k} \sum_{\bar{\gamma} \in \Gamma^k} e^{\frac{2 \pi i}{n} \bar{\ell} \cdot \bar{\gamma}} \text{.}
  \end{equation*}
  Since $\bar{\ell} \cdot \bar{\gamma} \in \mathbb{Z}$ and the function $exp : \mathbb{Z} \to \mathbb{C}$, $exp(m) = e^{\frac{2 \pi i}{n} m}$ is periodic with a period equal to $n$, then:
  \begin{equation*}
    \sum_{\bar{\gamma} \in \Gamma^k} e^{\frac{2 \pi i}{n} \bar{\ell} \cdot \bar{\gamma}}
    = \sum_{r = 0}^{n - 1}
        \sum_{
          \substack{
            \bar{\gamma} \in \Gamma^k \\
            \bar{\ell} \cdot \bar{\gamma} \equiv r
          }
        } e^{\frac{2 \pi i}{n} r}
  \end{equation*}
  where the congruence $\bar{\ell} \cdot \bar{\gamma} \equiv r$ is taken modulo $n$.
  The conditions for the existence of solutions to equations of the form:
  \begin{equation*}
    \bar{\ell} \cdot \bar{\gamma} \equiv r \text{ (mod $n$) \hspace{5mm} where } \bar{\gamma} \in \Gamma^k
  \end{equation*}
  are well understood (see~\cite[page 114]{mccarthy-1986}). In particular, this equation only has solutions when $g = gcd(l_1, \dots, l_k, n)$ divides $r$ and, in such case, the total number of solutions is equal to $g n^{k - 1}$. Therefore:
  \begin{equation*}
    \begin{aligned}
      \sum_{r = 0}^{n - 1}
        \sum_{
          \substack{
            \bar{\gamma} \in \Gamma^k \\
            \bar{\ell} \cdot \bar{\gamma} \equiv r
          }
        } e^{\frac{2 \pi i}{n} r}
      & = \sum_{\substack{
        r = 0 \\
        g | r
      }}^{n - 1} g n^{k - 1} e^{\frac{2 \pi i}{n} r} \\
      & = g n^{k - 1} \sum_{\substack{
        r' = 0
      }}^{\left\lfloor \frac{n - 1}{g} \right\rfloor} e^{\frac{2 \pi i}{n} g r'}
    \end{aligned}
  \end{equation*}
  where the last step comes from substituting $r = g r'$. If we also substitute $n = g n'$ and observe that ${\textstyle{\left\lfloor \frac{n - 1}{g} \right\rfloor = n' - 1}}$:
  \begin{equation*}
    \begin{aligned}
      \sum_{\substack{
        r' = 0
      }}^{\left\lfloor \frac{n - 1}{g} \right\rfloor} e^{\frac{2 \pi i}{n} g r'}
      & = \sum_{\substack{
        r' = 0
      }}^{n' - 1} e^{\frac{2 \pi i}{n'} r'} \text{.}
    \end{aligned}
  \end{equation*}
  The term on the right-hand side is the sum of all the roots of unity of order $n'$.
 It is a well-known fact that this sum is equal to $0$ whenever $n' > 1$. 
Finally, since $n > \min(|l_1|, \dots, |l_k|) \ge g$, then $n' = n / g > 1$, and the proof is complete.
\end{proof}

\begin{proof}[Alternative Proof of Theorem 1]
Let $k$ be a positive integer and $\bar{\ell} = (l_1, \dots, l_k)$ 
a non-zero $k$-dimensional vector of integers.
Let $N$ range freely over the natural numbers. 
As in  Section~\ref{subsection:proof-of-theorem-1}, we consider a 
prefix of the sequence $L$ of length~$N$, denoted $L_{1:N}$, 
and we consider the values $p$, $q$, and $r$ as functions of $N$.
  Let $\bar{w}_i = (L_i, \dots, L_{i + k - 1})$ for $i = 1, 2, \dots$. 
Analogously to the first proof of Theorem 1, we define the complex value $\nu_N$ 
as the Weyl sum over the first $N$ windows of size~$k$ of~$L$:
  \begin{equation*}
    \nu_N = \sum_{j = 1}^{N} e^{2 \pi i \bar{\ell} \cdot \bar{w}_j} \text{.}
  \end{equation*}
  Let $m = \max\left(k, \min\left(|l_1|, \dots, |l_k|\right)\right)$ and consider sufficiently large values of $N$ such that $m < r$. 
As before, this is always possible since $r$ is an unbounded, non-decreasing function of $N$. We can decompose $L_{1:N}$ into four consecutive sections; namely, sequences $S^{(1)}$, $S^{(2)}$, $S^{(3)}$ and $S^{(4)}$:
  \begin{equation*}
    \begin{aligned}
      L_{1:N} & = \left< S^{(1)} ; S^{(2)} ; S^{(3)} ; S^{(4)} \right> \text{, \hspace*{5mm} where} \\
      S^{(1)} & = \left< D^{(1)} ; D^{(2)} ; \dots ; D^{(m - 1)} \right> \\
      S^{(2)} & = \left< D^{(m)} ; D^{(m + 1)} ; \dots ; D^{(r - 1)} \right> \\
      S^{(3)} & = \left< \underbrace{C^{(r)} ; \dots ; C^{(r)}}_{q \text{ times}} \right> \\ 
      S^{(4)} & = C^{(r)}_{1:p}
    \end{aligned}
  \end{equation*}
  and define $\nu_N^{(1)}$, $\nu_N^{(2)}$, $\nu_N^{(3)}$ and $\nu_N^{(4)}$ as the Weyl sums over windows entirely contained within each respective section, such that:
  \begin{equation*}
    \begin{aligned}
      \nu_N & = \nu^{(1)}_N + \nu^{(2)}_N + \nu^{(3)}_N + \nu^{(4)}_N + \varepsilon_b
    \end{aligned}
  \end{equation*}
  for some complex number $\varepsilon_b$ with $|\varepsilon_b| \le 3 (k - 1)$
 which accounts for all windows crossing over any border.

  Following the same reasoning as in Section~\ref{subsection:proof-of-theorem-1}, 
the values of $\nu_N^{(2)}$ and $\nu_N^{(3)}$ can be computed as the Weyl
 sums over the $C$-sequences composing $S^{(2)}$ and $S^{(3)}$ viewed as cycles, 
plus two error terms accounting for right borders. However, in this case, due to 
Lemma~\ref{lemma:weyl-sum-over-cyclic-c-sequences} and the fact that all $C$ 
sequences in $S^{(2)}$ and $S^{(3)}$ have orders greater than $m$, 
these sums vanish to zero. Therefore, 
\[
\nu_N = \nu^{(1)}_N + \varepsilon_{\nu^{(2)}_N} + \varepsilon_{\nu^{(3)}_N} + \nu^{(4)}_N + \varepsilon_b\]
for some complex values 
$\varepsilon_{\nu^{(2)}_N}, \varepsilon_{\nu^{(3)}_N}$ with $|\varepsilon_{\nu^{(2)}_N}|, |\varepsilon_{\nu^{(3)}_N}| \le k - 1$.
  We now consider the limit of $\nu_N / N$ as $N \to \infty$. Observe  that:
  \begin{equation*}
    \begin{aligned}
      \left| \frac{\nu_N}{N} \right|
        & \le \frac{1}{N} \left| \nu^{(1)}_N + \varepsilon_{\nu^{(2)}_N} + \varepsilon_{\nu^{(3)}_N} + \varepsilon_b \right| + \frac{1}{N} \left| \nu^{(4)}_N \right| \\
        & \le \frac{1}{N} \left[ \left| \nu^{(1)}_N \right| + | \varepsilon_{\nu^{(2)}_N} | + | \varepsilon_{\nu^{(3)}_N} | + \left| \varepsilon_b \right| \right] + \frac{p}{N} \\
        & \le \frac{1}{N} \left[ \left| \nu^{(1)}_N \right| + (k - 1) + (k - 1) + 3 (k - 1) \right] + \frac{p}{N} \text{.}
    \end{aligned}
  \end{equation*}
  The numerator in the first term is dependent on $k$ and $m$, and
 constant as a function of $N$. As shown before, the second term 
approaches $0$ as $r \to \infty$. Therefore, the sum of both terms 
approaches $0$ as $N \to \infty$, which in turn implies that $\nu_N / N$ 
vanishes as well.

  Given that $k$ and $\bar{\ell}$ were chosen arbitrarily, Weyl's criterion 
is satisfied for all values of $k$ and the sequence $L$ is completely 
uniformly distributed.
\end{proof}

\bibliographystyle{plain}
\bibliography{ab}

\end{document}